\newtheorem{Theorem} {Theorem} [section]
\newtheorem{Proposition} [Theorem] {Proposition}
\newtheorem{Lemma} [Theorem] {Lemma}
\newtheorem{Example} [Theorem] {Example}
\newtheorem{Conjecture} [Theorem] {Conjecture}
\newcommand{\Ff}{{\mathbb F}}
\newcommand{\cA}{{\mathcal A}}
\newcommand{\cB}{{\mathcal B}}
\newcommand{\cC}{{\mathcal C}}
\newcommand{\cF}{{\mathcal F}}
\newcommand{\cG}{{\mathcal G}}
\newcommand{\cS}{{\mathcal S}}
\renewcommand{\phi}{\varphi} % was: phi
\newcommand{\gauss}[2]{\genfrac{[}{]}{0pt}{}{#1}{#2}}
\title{The Erd\H{o}s-Rado Sunflower Problem for Vector Spaces}
\author{\renewcommand\thefootnote{\alph{footnote}}
Ferdinand Ihringer\footnotemark[1] \and
\renewcommand\thefootnote{\alph{footnote}}
Andrey Kupavskii\footnotemark[2]
}
\date{18 September 2025}
\begin{document}
\maketitle

{\renewcommand\thefootnote{\alph{footnote}}
\footnotetext[1]{Dept.~of Mathematics,
Southern University of Science and Technology, Shenzhen, Guangdong, China.
E-mail: {\tt ihringer@sustech.edu.cn}}

\footnotetext[2]{Moscow Institute of Physics and Technology, Dolgoprudniy, Russia.
E-mail: {\tt kupavskii@ya.ru}}}

\begin{abstract}
  The famous Erd\H{o}s-Rado sunflower conjecture suggests that
  an $s$-sun\-flower-free family of $k$-element sets has size at most $(Cs)^k$
  for some absolute constant $C$.
  In this note, we investigate the analog problem for $k$-spaces over the field with $q$ elements.
  For $s \geq k+1$, we show that the largest $s$-sunflower-free family $\cF$ satisfies
  \[
     1 \leq |\cF| / q^{(s-1) \binom{k+1}{2} - k} \leq (q/(q-1))^k.
  \]
  For $s \leq k$, we show that
  \[
     q^{-\binom{k+1}{2}} \leq |\cF| / q^{(s-1) \binom{k+1}{2} - k} \leq (q/(q-1))^k.
  \]
  Our lower bounds rely on an iterative construction that uses lifted maximum rank-distance (MRD) codes.
\end{abstract}

\section{Introduction}

A family of $s$ sets $S_1, \ldots, S_s$ is called an \emph{$s$-sunflower} or a \emph{$\Delta(s)$-system}
with \emph{kernel $K$} if $K = S_i \cap S_j$ for any distinct $i,j.$
\footnote{%
The term ``$\Delta$-system'' goes back to Erd\H{o}s and Rado.
The term ``sunflower'' appeared in a paper by Deza and Frankl \cite{DF1981} and since late 80s was  used by the Boolean Circuit Complexity community, until it eventually took over the $\Delta$-system terminology.}
\footnote{%
Here a \emph{family} is a set, not a multiset.
}
In 1960, Erd\H{o}s and Rado proved
that for $s \geq 3$ an $s$-sunflower-free family of $k$-sets has size
at most $k! (s-1)^k$ \cite{ER1960}.
Erd\H{o}s and Rado suggested that the following is not unlikely:

\begin{Conjecture}[Erd\H{o}s-Rado Sunflower Conjecture]
 Let $s \geq 3$. Then there exists a constant $C$
 such that any $s$-sunflower-free family of $k$-sets $\cF$ satisfies $|\cF| \leq (Cs)^k$.
\end{Conjecture}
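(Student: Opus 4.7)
The plan is to follow the spread-family paradigm introduced by Alweiss, Lovett, Wu and Zhang, with subsequent simplifications by Rao and by Tao. Call a family $\cF$ of $k$-sets \emph{$r$-spread} if for every nonempty $T$ one has $|\{F \in \cF : T \subseteq F\}| \leq r^{-|T|}|\cF|$; the parameter $r$ measures how far $\cF$ is from being ``trapped'' by a heavy intersection. The overall strategy is a dichotomy: either $\cF$ is sufficiently spread, in which case a random-sampling argument produces a sunflower directly, or $\cF$ has a dense link, in which case we replace $\cF$ by that link and recurse on a family of smaller uniformity.

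The first main step is a \emph{spread-to-sunflower lemma}: if $\cF$ is $r$-spread with $r \geq C s \log k$ for a suitable absolute constant $C$, then $\cF$ contains an $s$-sunflower. I would prove this by choosing $s$ independent random subsets $W_1, \dots, W_s$ of the ground set, including each element in each $W_i$ with probability of order $1/(s \log k)$. The spread hypothesis would be used, via a greedy matching argument, to show that with positive probability one can select $F_i \in \cF$ whose portion outside a common kernel $K$ is contained in $W_i$; the near-disjointness of the samples then forces the $F_i$ to agree exactly on $K$, producing an $s$-sunflower. The second step is a \emph{link-and-iterate} argument: if $\cF$ fails to be $(Cs \log k)$-spread, a witness $T$ produces a link family $\cF_T \coloneq \{F \setminus T : T \subseteq F \in \cF\}$ of strictly smaller uniformity but higher relative density. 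Since an $s$-sunflower in $\cF_T$ lifts to one in $\cF$ by re-adjoining $T$ to each petal, we may recurse; unfolding the recursion gives a bound of the form $|\cF| \leq (Cs \log k)^k$.

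The main obstacle, and the reason the conjecture has remained open for over sixty years, is the stubborn logarithmic factor $\log k$ produced by the sampling step. Closing the gap to $(Cs)^k$ would require either a sharper spread-to-sunflower lemma showing that $r$ of order $s$ alone already forces a sunflower (a statement of Talagrand-type strength, closely related to fractional threshold conjectures), or an entirely different mechanism that bypasses the probabilistic sampling argument. I would attempt the first route by replacing the independent $W_i$ with a carefully correlated random partition of the ground set, hoping that the correlations kill the entropy factor responsible for $\log k$; this is precisely the step where I expect the approach to stall, since the same barrier has resisted all current techniques.
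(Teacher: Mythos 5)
You have correctly recognized that the statement in question is not a theorem of this paper but an open conjecture; the paper merely records the Erd\H{o}s-Rado Sunflower Conjecture as background, proves nothing about it, and goes on to study the vector-space analogue. Your ``proposal'' is therefore best read not as a proof but as an accurate summary of the state of the art, and as such it is sound: the spread dichotomy of Alweiss, Lovett, Wu and Zhang (with the later simplifications of Rao, Tao, Frankston--Kahn--Narayanan--Park and the refinement of Bell, Chueluecha and Warnke recorded in the paper) does yield $|\cF| \leq (Cs\log k)^k$, exactly as you sketch, via the spread-to-sunflower sampling lemma and the link-and-iterate recursion.

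The genuine gap is precisely the one you name in your final paragraph: the random-sampling step costs a factor of order $\log k$ per level of the recursion, and no known modification --- including correlated or partition-based sampling schemes --- removes it. Replacing the independent $W_1,\dots,W_s$ by a random partition does not obviously help, because the entropy loss occurs already in the single-petal ``fractional covering'' step (showing that an $r$-spread family has, with high probability, a member inside a $p$-random set), and that step is tight up to constants for $r \asymp (1/p)\log k$; this is the same obstruction that appears in the Kahn--Kalai / Park--Pham expectation-threshold setting. So your approach, executed as described, proves $|\cF| \leq (Cs\log k)^k$ but would stall exactly where you predict it would, and the conjectured $(Cs)^k$ remains out of reach. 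In short: no error in your reasoning, but also no proof of the stated conjecture --- consistent with the fact that the paper itself offers none and the problem is open.
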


One can obtain an $s$-sunflower-free family of $k$-sets of size $(s-1)^k$ considering a complete $k$-partite hypergraph with parts of size $s-1.$
Recently, a breakthrough made by Alweiss, Lovett, Wu, and Zhang \cite{ALWZ2021} allowed to push the bound much closer to the conjectured one. A slight refinement of their bound was obtained by Bell, Chueluecha
and Warnke \cite{BCW}, we know the following.

\begin{Theorem}[Alweiss-Lovett-Wu-Zhang \cite{ALWZ2021}, Bell-Chueluecha-Warnke \cite{BCW}]
 Let $s \geq 3$.
 Any $s$-sunflower-free family of $k$-sets $\cF$ satisfies $|\cF| \leq (C s \log k)^k$ for some absolute $C$.
\end{Theorem}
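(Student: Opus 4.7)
The plan is to follow the framework of Alweiss, Lovett, Wu, and Zhang, in the streamlined form of Bell, Chueluecha, and Warnke. The central notion is that of a \emph{spread} family: call $\cG \subseteq \binom{[n]}{k'}$ \emph{$r$-spread} if $|\{G \in \cG : T \subseteq G\}| \leq r^{-|T|} |\cG|$ for every non-empty $T \subseteq [n]$. I would aim to prove the contrapositive: if $|\cF| > (Cs \log k)^k$ for a suitable absolute constant $C$, then $\cF$ contains an $s$-sunflower.

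Step 1 (reduction to a spread family). Fix $r \coloneq cs \log k$ for a small absolute $c > 0$. So long as there is some non-empty $T$ contained in more than $r^{-|T|}|\cF|$ members of the current family, replace $\cF$ by its link $\cF_T \coloneq \{F \setminus T : F \in \cF,\ T \subseteq F\}$. This decreases the uniformity by $|T|$ while multiplying the ratio $|\cF|/(Cr)^{k'}$ by at least $C^{|T|}$, so the process terminates with an $r$-spread family $\tilde \cF$ of $k''$-sets (with $k'' \leq k$) satisfying $|\tilde\cF| > (Cr)^{k''}$. Moreover every member of $\tilde\cF$ arises from some $F \in \cF$ by deleting a common kernel $K$, where $K$ is the union of the subsets peeled off along the way.

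Step 2 (spread lemma and sunflower extraction). The heart of the argument is to show that if $\tilde\cF$ is $r$-spread with $r \geq Cs\log k''$, then a uniformly random subset $W$ of the ground set of density $1/s$ contains some $F \in \tilde\cF$ with probability at least $1 - 1/(2s)$. A uniformly random partition of the ground set into $s$ parts and a union bound then produce one member of $\tilde\cF$ inside each part; these $s$ members are pairwise disjoint. Adjoining the kernel $K$ to each of them yields $s$ sets of the original $\cF$ whose pairwise intersections all equal $K$, that is, an $s$-sunflower, contradicting the hypothesis.

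The main obstacle is the spread lemma invoked in Step 2. Its proof proceeds by a potential/encoding argument: elements of $W$ are revealed in a prescribed random order, the residual family is shown to remain sufficiently spread conditional on the revealed information, and the probability that no $F \in \tilde\cF$ is ever covered is bounded via an information-theoretic count against the guarantee supplied by spreadness. This is where the delicate probabilistic and combinatorial estimates lie, and it is precisely the step whose constants the Bell-Chueluecha-Warnke refinement optimizes to pull the bound down to $(Cs\log k)^k$.
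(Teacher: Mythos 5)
The paper does not prove this theorem---it is quoted as a known result from Alweiss, Lovett, Wu, and Zhang \cite{ALWZ2021} and Bell, Chueluecha, and Warnke \cite{BCW}---so there is no in-paper argument against which to compare your proposal. Your outline is a faithful high-level summary of the actual argument in those references: iteratively pass to links to obtain an $r$-spread family with $r=\Theta(s\log k)$ while the quantity $|\cF|/(Cr)^{k'}$ only grows, then apply a spread lemma to a uniformly random $s$-partition of the ground set so that, by a union bound, each part contains a member of the spread family, and finally re-attach the common kernel $K$ to those $s$ pairwise disjoint sets to produce an $s$-sunflower in the original $\cF$. You also correctly flag that the entire technical content lives in the spread lemma, whose potential/encoding proof you only name and do not carry out; so what you have written is an accurate roadmap rather than a self-contained proof, which is the same level of detail at which the present paper treats the statement.
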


Here we consider the natural analog of this problem in finite vector spaces.
For a positive integer $n$ and a prime power $q$,
let $V = V(n, q)$ denote the finite vector space of dimension $n$ over the field
with $q$ elements. We say that a family of subspaces $\cS$ is in \emph{general position}
if $\dim( \sum_{S \in \cS} S) = \sum_{S \in \cS} \dim(S)$.
A family of $s$ $k$-spaces $S_1, \ldots, S_s$ of $V$ is called an \emph{$s$-sunflower with kernel $K$}
if there exists a $d$-space $K$ such that $K = S_i \cap S_j$ for any distinct $i,j$
and $S_1/K, \ldots, S_s/K$ are in general position.
In other words, $\dim(S_1 + \cdots + S_s) = d+s(k-d)$.

The aim of this note is to prove the following theorem. See the next section for the definition of Gaussian coefficients $[n]_q$.

\begin{Theorem}\label{thm:main}
 Let $s \geq 3$, $k \geq 2$, and $q$ a prime power. Let $\cF$ be an $s$-sunflower-free family $\cF$ of $k$-spaces over the finite field with $q$ elements. Then $$|\cF|\le \prod_{i=1}^k[i(s-1)]_q \le \Big(\frac q{q-1}\Big)^k q^{(s-1) \binom{k+1}{2} - k}.$$
 Moreover, there exist such families $\cF$ that satisfy the following bounds:
 \begin{enumerate}
  \item For $s \geq k+1$,
 \[
    |\cF| \ge q^{(s-1) \binom{k+1}{2} - k}.
 \]
 \item For $s \leq k$,
 \[
   |\cF|\geq q^{(s-2) \binom{k+1}{2} + k \frac{\mathrm{gcd}(k, s-1) - 3}{2}} \geq q^{(s-2) \binom{k+1}{2} - k}.
 \]
 \end{enumerate}
\end{Theorem}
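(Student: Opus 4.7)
I would prove the upper bound $|\cF|\le\prod_{i=1}^k[i(s-1)]_q$ by induction on $k$, imitating the classical Erd\H{o}s--Rado sunflower argument. In the inductive step, pick a maximal subfamily $T_1,\dots,T_t\in\cF$ that is itself in general position (a potential sunflower with trivial kernel). The hypothesis that $\cF$ has no $s$-sunflower forces $t\le s-1$, so $W\coloneq T_1+\cdots+T_t$ satisfies $\dim W\le(s-1)k$. Maximality then forces every $S\in\cF$ to meet $W$ nontrivially and hence to contain some $1$-space $L\subseteq W$. For each such $L$, the family $\cF_L\coloneq\{S/L:L\subseteq S\in\cF\}$ is an $s$-sunflower-free family of $(k-1)$-spaces in $V/L$: any sunflower in the quotient has kernel containing $L$, and it pulls back to a sunflower in $V$ with the same combinatorial data (intersections and the sum-dimension condition). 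The induction hypothesis gives $|\cF_L|\le\prod_{i=1}^{k-1}[i(s-1)]_q$, and summing over the $[(s-1)k]_q$ choices of $L\subseteq W$ produces the claimed product. The base case $k=1$ is simply that an $s$-sunflower-free family of $1$-spaces must span a subspace of dimension at most $s-1$. The weaker inequality $|\cF|\le(q/(q-1))^k q^{(s-1)\binom{k+1}{2}-k}$ is then the termwise estimate $[m]_q\le(q/(q-1))\,q^{m-1}$.

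For the lower bounds I would construct $\cF$ iteratively, matching the recursion hidden in the upper-bound proof. The natural device is a lifted MRD code: a minimum-rank-distance code $\cC\subseteq\Ff_q^{k\times m}$ lifts to the family of $k$-spaces $\{[I_k\mid A]:A\in\cC\}\subseteq V(k+m,q)$, any two of which intersect in dimension $k-\rk(A-A')$, tightly controlled by the code's rank distance. Taking $m=(s-1)k$ with an appropriately chosen rank distance should produce on the order of $q^{(s-1)k-1}$ lifted $k$-spaces at each iteration; plugging a previous-level $(k-1)$-dimensional family into the quotient direction then multiplies out to $q^{(s-1)\binom{k+1}{2}-k}$, matching the claim when $s\ge k+1$. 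For $s\le k$ the ambient width $m$ cannot reach full rank distance $k$, causing a loss of up to $q^{\binom{k}{2}}$; the extra $q^{k(\gcd(k,s-1)-1)/2}$ factor presumably comes from partially recovering this loss via a spread-type refinement of the top-layer code, which is available precisely when $(s-1)\mid k$.

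The main obstacle is verifying that the iterated MRD construction is genuinely $s$-sunflower-free rather than merely pairwise-intersection-controlled. A sunflower couples two requirements --- identical pairwise intersections and general position of the quotients --- and both must be ruled out simultaneously for every $s$-subfamily. I expect the rank-distance property of the code to be exactly the right tool: if $s$ lifted spaces had equal pairwise kernels, the corresponding matrix differences would lie in a proper affine subspace, which via the MRD bound would force the sum of the lifted spaces to collapse below the general-position dimension, contradicting the sunflower definition. Propagating this contradiction through every level of the iteration, and bookkeeping the exponents so as to land exactly at $q^{(s-1)\binom{k+1}{2}-k}$ rather than a power of $q$ smaller by an additive constant, is where I expect the bulk of the technical work to lie.
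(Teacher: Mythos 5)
Your upper-bound argument is correct and is essentially the paper's proof. The paper also takes a maximal subfamily $\cS\subseteq\cF$ (phrased there as ``pairwise disjoint'' $k$-spaces, though one really wants, as you say, a maximal subfamily in \emph{general position} so that the existence of $s$ of them is literally a trivial-kernel $s$-sunflower), deduces $|\cS|\le s-1$ and $\dim M\le(s-1)k$ where $M=\sum_{T\in\cS}T$, observes that maximality forces every $F\in\cF$ to meet $M$, and then sums the inductive bound over the $[k(s-1)]$ one-spaces of $M$. Your reduction to quotients $\cF_L$ and the termwise estimate $[m]\le\frac{q}{q-1}q^{m-1}$ also match the paper.

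The lower bound, however, is only a sketch: you have the right raw material (lifted MRD codes, dimension of intersection $=k-\rk(A-A')$, iterate by plugging a smaller family into each member of a larger one), but the proposal does not pin down the nesting and so does not constitute a proof. The paper makes the iteration precise by working with what it calls $(m,d;V,\Sigma,T)$-families: a collection $\cC$ of $m$-spaces, each meeting a fixed $\Sigma$ exactly in a fixed $\tau$-space $T\subseteq\Sigma$, such that no $d$-space disjoint from $T$ lies in two members of $\cC$. The lifted MRD theorem gives $|\cC|\ge q^{d(n-m)}$ (or $q^{(m-\tau)(n-2m+\tau+d)}$ in the ``thick'' regime). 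For $s\ge k+1$ one fixes a chain $T_1\subseteq\cdots\subseteq T_{k-1}$ and nests $k$ layers with $(m_i,d_i)=(k-1+i(s-1),\,k-i)$; the product of the layer sizes is exactly $q^{(s-1)\binom{k+1}{2}-k}$, and a short lemma shows that two $k$-spaces sitting inside \emph{distinct} members $C,C'\in\cC_{i-1}$ meet in dimension $\le k-i$, whence any putative $s$-sunflower would need to span more than the enclosing $m_i$-space can hold. Your heuristic ``the MRD bound forces the sum to collapse'' gestures in the same direction but does not establish it; the key quantitative step is this dimension count against the nested $m_i$'s, and you would need to carry it through every level.

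Your explanation of the $\gcd$ term for $s\le k$ is the one clear misstatement. There is no ``spread-type refinement of the top-layer code'' and the bonus is not ``available precisely when $(s-1)\mid k$''. In the paper, for $s\le k$ one can only nest $s-1$ MRD layers (with $(m_i,d_i)=(ik-1,\lfloor\frac{s-1-i}{s-1}k\rfloor+1)$), the exponent becomes $(s-2)k+k\sum_{i=1}^{s-2}\lfloor ik/(s-1)\rfloor$, and the $\gcd$ emerges purely from the floor-sum identity
\[
\sum_{\kappa=1}^{\nu-1}\left\lfloor\frac{\kappa\mu}{\nu}\right\rfloor=\frac{(\mu-1)(\nu-1)+\gcd(\mu,\nu)-1}{2}.
\]
The correction term $q^{k(\gcd(k,s-1)-1)/2}$ is therefore nontrivial whenever $\gcd(k,s-1)>1$, not only when $(s-1)\mid k$, and it reflects how much of the ideal fractional exponent survives the rounding in the $d_i$'s rather than any extra combinatorial gadget.
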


The upper bound is a straightforward adaptation of the
 $k! (s-1)^k$ bound by Erd\H{o}s and Rado. The main content of the theorem is in the lower bound, which relies on a recursive
nesting of lifted maximum-rank distance (MRD) codes which go back to Gabidulin \cite{Gabidulin1985}. One can see that in the case of fixed $k$ and large $s$ the size of the largest $s$-sunflower-free $\cF$ is determined up to a constant.

There is an alternative definition of a sunflower in the literature on subspace codes
which replaces the condition that the subspaces $S_1/K, \ldots, S_s/K$ are in general position by the condition that they are just pairwise disjoint (pairwise in general position), see Etzion and Raviv \cite{ER2015}. Here the literature is focused on
generalizing a classical result by Deza \cite{Deza1973}, for instance, see \cite{BDBD2022}.
If we replace the condition that
$S_1/K, \ldots, S_d/K$ are just \emph{pairwise disjoint}, then we speak of
an \emph{$s$-set-like sunflower with kernel $K$}.
Bounds and construction for $s$-set-like sunflowers can be obtained in a similar way as in the set case. We omit their discussion for that reason. We also note that the key property of sunflowers formed by sets is as follows: if a $k$-element set intersects a sunflower with $k+1$ petals, then it must intersect the core. When $k$-sets are being replaced by $k$-dimensional vector spaces, we get the first, general position, definition, and not the second, disjointness,  definition.

\section{Preliminaries}

\subsection{Gaussian Coefficients}

Here we define \emph{Gaussian coefficients} (or \emph{$q$-binomial coefficients}).
For $q > 0$, write
\[
 [m]_q := \lim_{r \rightarrow q} \frac{r^m-1}{r-1} = \begin{cases}
           m & \text{ if } q = 1,\\
           \frac{q^m-1}{q-1} & \text{otherwise.}
          \end{cases}
\]
For $n \geq m \geq 0$, put
\[
 \gauss{n}{m}_q := \prod_{i=1}^m \frac{[n-i+1]_q}{[i]_q}.
\]
For $m > n$ or $m < 0$, put $\gauss{n}{m}_q = 0$.
Usually, $q$ is clear by context and we write $[m]$ for $[m]_q$ and $\gauss{n}{m}$ for $\gauss{n}{m}_q$.
For a vector space $V$, let $\gauss{V}{m}$ denote its family of $m$-subspaces.
For $q$ a prime power and $V = V(n, q)$, we have $\left| \gauss{V}{m} \right| = \gauss{n}{m}$.

\begin{Lemma}\label{lem:gauss_bnds}
  Let $m \geq 2$. Then
  \[(1 + \tfrac1q) q^{m-1} \leq [m] < (1 + \tfrac{1}{q-1}) q^{m-1}.\]
\end{Lemma}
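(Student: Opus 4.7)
The plan is to use the closed form $[m]_q = (q^m-1)/(q-1)$ (valid for $q \neq 1$, which is the case here since $q$ is a prime power) and verify both inequalities by elementary algebra.

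For the upper bound, I would simply drop the $-1$ in the numerator:
\[
[m]_q = \frac{q^m-1}{q-1} < \frac{q^m}{q-1} = q^{m-1}\cdot\frac{q}{q-1} = \Bigl(1+\frac{1}{q-1}\Bigr)q^{m-1}.
\]
This is strict because $1 > 0$, and requires no hypothesis on $m$ beyond $m \geq 1$.

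For the lower bound, I would clear denominators: the claim $(1+\tfrac{1}{q})q^{m-1} \leq [m]_q$ is equivalent to $(q^{m-1}+q^{m-2})(q-1) \leq q^m - 1$, i.e.\ $q^m - q^{m-2} \leq q^m - 1$, i.e.\ $q^{m-2} \geq 1$. This is where the hypothesis $m \geq 2$ enters (together with $q \geq 2$, which holds since $q$ is a prime power), and it is the only nontrivial constraint in the lemma.

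There is no real obstacle here; the ``main'' step is just recognizing which direction of inequality each bound corresponds to after clearing $(q-1)$. One could alternatively write $[m]_q = q^{m-1} + q^{m-2} + \cdots + 1$ and observe that the lower bound is just the first two terms of the sum (giving $m \geq 2$ as the natural threshold), while the upper bound follows from the geometric series tail $\sum_{i \geq 0} q^{-i} = q/(q-1)$ applied to $[m]_q / q^{m-1}$. I would probably present it in this second, more conceptual form, as it makes transparent both where each bound comes from and why $m \geq 2$ is needed for the lower bound (one needs at least the $q^{m-2}$ summand).
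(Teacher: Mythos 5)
Your proposal is correct, and your preferred ``second, more conceptual form'' is exactly the paper's proof: the lower bound keeps the first two terms of $[m] = q^{m-1}+q^{m-2}+\cdots+1$, and the upper bound drops the $-1$ in the numerator to get $q^m/(q-1)$. The denominator-clearing computation you describe first is a trivially equivalent rearrangement of the same estimate.
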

\begin{proof}
  For the lower bound, we use $\frac{q^m-1}{q-1} = q^{m-1}+q^{m-2}+\ldots+1\ge q^{m-1}+q^{m-2} = (1+1/q)q^{m-1}$.
  For the upper bound we use $[m] = \frac{q^m-1}{q-1} < \frac{q^m}{q-1} = (1 + \frac{1}{q-1}) q^{m-1}$.
\end{proof}

\subsection{Subspace Codes}

We can define a metric on the family of subspaces of $V(n, q)$ with distance-function $\delta$ by putting
\[
 \delta(S, T) = \dim(S+T) - \dim(S \cap T) = \dim(S) + \dim(T) - 2\dim(S \cap T).
\]
A family $\cC$ of $m$-spaces of $V(n, q)$ such that $\delta(S, T) \geq D$ for all $S,T \in \cC$ is
called an \emph{$(n, D; m)$-subspace code}.
Based on Gabidulin codes \cite{Gabidulin1985}, Silva, Kschischang and K\"otter showed
the following \cite[\S{}IV.A]{SKK2007}.

\begin{Theorem}[Lifted MRD Codes]\label{thm:lifted_MRDorig}
 For any $n \geq m \geq 0$ and $D \leq 2\min(m, \allowbreak n-m)$ even,
 there exists an $(n, D; m)$-subspace code $\cC$ of size
 \[
  q^{\max(m, n-m) (\min(m, n-m) - D/2 + 1)}
 \]
 such that all elements of $\cC$ are disjoint to a fixed $(n-m)$-space.
\end{Theorem}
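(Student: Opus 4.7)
The plan is to use the standard lifting construction, which realizes subspace codes as lifts of rank-metric codes. Fix a direct sum decomposition $V = U \oplus W$ with $\dim U = m$ and $\dim W = n - m$; this $W$ will be the fixed $(n-m)$-space to which every codeword is disjoint. An $m$-subspace of $V$ meets $W$ trivially if and only if it is the graph $S_\phi = \{u + \phi(u) : u \in U\}$ of a unique $\Ff_q$-linear map $\phi \colon U \to W$, and choosing bases of $U$ and $W$ identifies such maps with matrices in $\Ff_q^{(n-m) \times m}$.

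The first step is the distance identity $\delta(S_A, S_B) = 2\rk(A - B)$. An element of $S_A \cap S_B$ has the form $u + Au = v + Bv$ with $u, v \in U$ and $Au, Bv \in W$; the direct-sum decomposition forces $u = v$ and $(A - B)u = 0$, so $\dim(S_A \cap S_B) = m - \rk(A - B)$ and hence $\delta(S_A, S_B) = 2m - 2\dim(S_A \cap S_B) = 2\rk(A - B)$. Constructing an $(n, D; m)$-subspace code disjoint from $W$ therefore reduces, via lifting, to exhibiting a matrix code $\cC \subseteq \Ff_q^{(n-m) \times m}$ whose nonzero differences all have rank at least $d \coloneq D/2$.

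The second step is Gabidulin's construction of such an MRD code of size $q^{\max(m,n-m)(\min(m,n-m) - d + 1)}$. Set $N = \min(m, n-m)$ and $M = \max(m, n-m)$. Identify $\Ff_q^M$ with $\Ff_{q^M}$ via a fixed $\Ff_q$-basis and pick $\Ff_q$-linearly independent elements $\alpha_1, \ldots, \alpha_N \in \Ff_{q^M}$ (possible since $N \le M$). For each $(a_0, \ldots, a_{N-d}) \in \Ff_{q^M}^{N - d + 1}$, form the linearized polynomial $f(x) = \sum_{i=0}^{N-d} a_i x^{q^i}$ and the vector $(f(\alpha_1), \ldots, f(\alpha_N))$; writing each $f(\alpha_j)$ as its coordinate column in the chosen basis produces an $M \times N$ matrix over $\Ff_q$. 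A nonzero such $f$ has at most $q^{N-d}$ roots in $\Ff_{q^M}$, and these roots form an $\Ff_q$-subspace of dimension at most $N - d$; intersecting with $\mathrm{span}_{\Ff_q}(\alpha_1, \ldots, \alpha_N)$ yields an $\Ff_q$-kernel of dimension at most $N - d$, so the matrix has rank at least $d$. This produces a code of $q^{M(N - d + 1)}$ matrices; in the case $m > n - m$ one transposes to land in $\Ff_q^{(n-m) \times m}$ before lifting.

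The only substantive ingredient is the rank bound for linearized polynomials; everything else is bookkeeping around the graph-of-linear-map bijection and the distance identity. Granting that classical bound, the construction yields an $(n, D; m)$-subspace code of the advertised size, all of whose members are disjoint from the fixed $(n-m)$-space $W$ by construction.
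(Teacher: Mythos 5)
Your proof is correct: the paper does not actually prove Theorem~\ref{thm:lifted_MRDorig} but cites it from Silva--Kschischang--K\"otter (building on Gabidulin), and your argument --- representing $m$-spaces disjoint from $W$ as graphs of linear maps, the identity $\delta(S_A,S_B)=2\rk(A-B)$, and the linearized-polynomial evaluation code of size $q^{\max(m,n-m)(\min(m,n-m)-D/2+1)}$ --- is precisely the standard proof found in those references. The only pedantic caveat is that distinctness of the codewords (injectivity of the evaluation map) requires $D\geq 2$, i.e.\ $d=D/2\geq 1$; the degenerate case $D=0$ is never used in the paper (and the stated size is not attainable there anyway), so this does not affect anything.
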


Note that for almost all valid parameters larger $(n, D; m)$-subspace
codes than those in Theorem \ref{thm:lifted_MRDorig}
are known, see also \cite{HK2017}.
The property that all elements of $\cC$ are disjoint
from a fixed $(n-m)$-space is crucial for us.

\smallskip

\noindent
For our applications, it is more convenient to rephrase
Theorem \ref{thm:lifted_MRDorig} as follows.

\begin{Proposition}\label{prop:lifted_MRD}
 For any $n \geq m \geq d \geq 0$,
 there exists a family of $m$-spaces $\cC$ in $V(n, q)$
 of size $q^{d(n-m)}$ if $n \geq 2m$, respectively,
 of size $q^{m(n-2m+d)}$ if $n \leq 2m$ and $n-2m+d \geq 0$, such that
 \begin{enumerate}
  \item any $d$-space lies in at most one element of $\cC$,
  \item all elements of $\cC$ are disjoint to a fixed $(n-m)$-space.
 \end{enumerate}
\end{Proposition}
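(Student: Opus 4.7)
The plan is to derive Proposition~\ref{prop:lifted_MRD} as a direct translation of Theorem~\ref{thm:lifted_MRDorig} under the parameter dictionary $D = 2(m - d + 1)$. First I would recast the Proposition's condition ``any $d$-space lies in at most one element of $\cC$'' in the language of the subspace distance: two distinct $m$-spaces $S, T \in \cC$ contain a common $d$-space if and only if $\dim(S \cap T) \geq d$, and since $\delta(S, T) = 2m - 2 \dim(S \cap T)$ for $m$-spaces, the Proposition's condition is precisely $\delta(S, T) \geq 2(m - d + 1)$ for all distinct $S, T \in \cC$. Combined with the disjointness-from-$(n-m)$-space requirement, this is exactly what Theorem~\ref{thm:lifted_MRDorig} produces with $D = 2(m - d + 1)$.

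Next I would check admissibility of this $D$ and compute the resulting size in each case. The quantity $D = 2(m-d+1)$ is automatically even. In the case $n \geq 2m$ one has $\min(m, n-m) = m$ and $\max(m, n-m) = n-m$; the requirement $D \leq 2m$ reads $d \geq 1$, and Theorem~\ref{thm:lifted_MRDorig} then yields a code of size $q^{(n-m)(m - (m-d+1) + 1)} = q^{d(n-m)}$, matching the claim. In the case $n \leq 2m$ one has $\min(m, n-m) = n-m$ and $\max(m, n-m) = m$; the requirement $D \leq 2(n-m)$ reads $d \geq 2m - n + 1$, and the size is $q^{m((n-m) - (m-d+1) + 1)} = q^{m(n-2m+d)}$, again matching the claim.

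Finally I would handle the edge cases where $D$ is not admissible. If $n \geq 2m$ and $d = 0$, the claimed bound reads $q^{0} = 1$ and a single complement of the fixed $(n-m)$-space suffices. If $n \leq 2m$ and $d \leq 2m - n$, then any two $m$-spaces $S, T$ each disjoint from a fixed $(n-m)$-space $U$ are complements of $U$, hence $\dim(S \cap T) = 2m - \dim(S + T) \geq 2m - n \geq d$; so any admissible $\cC$ has $|\cC| \leq 1$, and since $q^{m(n-2m+d)} \leq 1$ in this range, one element again suffices. There is no genuine mathematical difficulty beyond Theorem~\ref{thm:lifted_MRDorig}: the only substantive step is the translation from the minimum-distance formulation to the ``common $d$-space'' formulation, and the remainder is parameter bookkeeping plus these boundary checks, which is where I would expect to spend most of the care.
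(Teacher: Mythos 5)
Your proof is correct and follows essentially the same route as the paper's: translate the ``at most one common $d$-space'' condition into the minimum-distance condition $\delta(C,C') \ge 2(m-d+1)$ and invoke Theorem~\ref{thm:lifted_MRDorig} with $D = 2(m-d+1)$. You are in fact somewhat more careful than the paper's one-line proof, which does not spell out the admissibility constraint $D \le 2\min(m,n-m)$ or the boundary cases $d=0$ (resp.\ $d \le 2m-n$) that you correctly dispose of at the end.
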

\begin{proof}
  The first condition is equivalent to $\dim(C \cap C') \leq d-1$
  for any distinct $C, C' \in \cC$. As $\dim(C) = \dim(C') = m$,
  we may reformulate the first condition as $\delta(C, C') \geq 2(m-d+1)$.
  That is, $\cC$ is an $(n, D; m)$-code with minimum distance $D = 2(m-d+1)$.
  The case $n-2m+d = 0$ is trivial.
  Theorem \ref{thm:lifted_MRDorig} shows the assertion in all other cases.
\end{proof}

\subsection{The Upper Bound}

For a family of subspaces $\cS$,
we write $\sum_{S \in \cS} S$ for the span of all subspaces in $\cS$.
Let $V$ be an $n$-dimensional vector space over a field $\Ff$.
If $K$ is an $i$-space in $V$,
then \textit{quotient space} $V/K$ is isomorphic to the $(n-i)$-dimensional
vector space over $\Ff$. If $A$ is a $k$-space in $V$ with $K \subseteq A$,
then $A/K$ is a $(k-i)$-space in $V/K$.
More generally, if $\dim(A \cap K) = j$,
then $(A+K)/K$ is a $(k-j)$-space in $V/K$.

\begin{Lemma}\label{lem:upper_bnd}
 Let $s \geq 3$. Let $\cF$ be an $s$-sunflower-free family of $k$-spaces.
 Then $|\cF| \leq [k(s-1)] \cdot [(k-1)(s-1)] \cdots [s-1]$.
\end{Lemma}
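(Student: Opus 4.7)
The plan is to $q$-analog the classical Erd\H{o}s--Rado argument---replacing ``element of a set'' by ``$1$-dimensional subspace''---and induct on $k$. The base case $k = 1$ is quick: any $s$ linearly independent $1$-spaces form an $s$-sunflower with kernel $\{0\}$, so an $s$-sunflower-free family of $1$-spaces must span a subspace of dimension at most $s-1$, yielding $|\cF| \leq [s-1]$.

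For the inductive step I would pick a maximal sub-collection $\cS_1, \ldots, \cS_t \in \cF$ that is in general position. If $t \geq s$, then $\cS_1, \ldots, \cS_s$ already form an $s$-sunflower with trivial kernel, contradicting the hypothesis; so $t \leq s-1$, and $U := \sum_{i=1}^t \cS_i$ satisfies $\dim U = tk \leq k(s-1)$. Maximality forces every $F \in \cF$ to fail general position with the $\cS_i$'s, which, given that the $\cS_i$'s themselves are in general position, amounts to $F \cap U \neq \{0\}$. Counting incidences between members of $\cF$ and the $[\dim U] \leq [k(s-1)]$ one-spaces of $U$, pigeonhole then yields a $1$-space $P \subseteq U$ lying in at least $|\cF|/[k(s-1)]$ members of $\cF$.

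Setting $\cF_P := \{F/P : P \subseteq F \in \cF\}$, a family of $(k-1)$-spaces in $V/P$, the key verification is that $\cF_P$ is itself $s$-sunflower-free. Indeed, if $F_1/P, \ldots, F_s/P$ formed an $s$-sunflower with kernel $K/P$ in $V/P$, then $(F_i \cap F_j)/P = (F_i/P) \cap (F_j/P) = K/P$ gives $F_i \cap F_j = K$, and the isomorphism $(V/P)/(K/P) \cong V/K$ transports the general-position condition back to $V/K$, making $F_1, \ldots, F_s$ an $s$-sunflower in $V$ with kernel $K$---a contradiction. The inductive hypothesis then bounds $|\cF_P| \leq \prod_{i=1}^{k-1}[i(s-1)]$, and multiplying through gives $|\cF| \leq [k(s-1)] \cdot |\cF_P| \leq \prod_{i=1}^{k}[i(s-1)]$, as required.

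I expect the only real bookkeeping to lie in the two compatibility checks---``$\{\cS_1,\ldots,\cS_t,F\}$ not in general position'' $\Leftrightarrow$ ``$F \cap U \neq \{0\}$,'' and the preservation of both kernel and general-position conditions under the quotient $V \to V/P$---both routine from the definitions. The final inequality $\prod_{i=1}^k[i(s-1)] \leq (q/(q-1))^k q^{(s-1)\binom{k+1}{2}-k}$ stated in Theorem~\ref{thm:main} follows immediately from the upper bound of Lemma~\ref{lem:gauss_bnds}.
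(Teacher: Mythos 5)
Your argument is correct and follows the same inductive skeleton as the paper: base case $k=1$, choose a maximal obstructing subcollection $\cS\subseteq\cF$, observe that every $F\in\cF$ meets $M=\sum_{S\in\cS}S$, cover $\cF$ by the $1$-spaces of $M$, quotient by one of them, and induct. The one place you diverge is in the choice of $\cS$: you take a maximal subcollection of $\cF$ \emph{in general position}, while the paper's proof says ``pairwise disjoint''. Your version is the precise condition that is actually needed. From $s$ pairwise disjoint $k$-spaces one cannot deduce an $s$-sunflower: a sunflower with kernel $\{0\}$ additionally requires $\dim(S_1+\cdots+S_s)=sk$, which pairwise disjointness alone does not give. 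For instance, a spread of $V(2k,q)$ is an $s$-sunflower-free family of $q^k+1$ pairwise-disjoint $k$-spaces, so a maximal pairwise-disjoint subcollection can be far larger than $s-1$. With ``general position'' the bound $|\cS|\le s-1$ is immediate (any $s$ members would form an $s$-sunflower with trivial kernel), maximality still forces $F\cap U\neq\{0\}$ for every $F\in\cF$, and the rest goes through unchanged. Your final step is phrased as a pigeonhole (``some $P$ lies in at least $|\cF|/[k(s-1)]$ members''), whereas the paper sums the inductive bound on $|\cF_P|$ over all $1$-spaces $P$ of $M$; these are the same count read in two directions.
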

\begin{proof}
 Note that for a fixed $i$-space $K$, $\cF(K) = \{ A/K: A \in \cF, K \subseteq A \}$
 is an $s$-sunflower-free family of $(k-i)$-spaces.

% \textcolor{orange}{don't we want to define $\sum$ and factor? Or is it so standard for anyone who may read the paper?}
We assume that $\cF$ is maximal and show the claim by induction.
 For $k=1$, clearly $|\cF| \leq [s-1]$.
 For $k>1$, let us find a maximal collection $\cS$
 of pairwise disjoint $k$-spaces in $\cF$. Since $\cF$ is $s$-sunflower-free, we have $|\cS|\le s-1$.
 Put $M := \sum_{S \in \cS} S$. Then $\dim(M) \le (s-1)k$.
 By maximality of $\cS$, each element of $\cF$
 meets $M$ in an at least $1$-dimensional subspace. The subspace $M$ has
 at most $[k(s-1)]$ $1$-spaces, and each $1$-space $P$ lies in
 at most $[(k-1)(s-1)]\cdots [s-1]$ members of $\cF$ by induction.
\end{proof}

Now the quantitative upper bound in Theorem~\ref{thm:main} follows via an application of Lemma~\ref{lem:gauss_bnds}.

\section{Constructions for Sunflower-Free Families}

\subsection{Expository Examples}

% \textcolor{orange}{For $s=3$, I found out that there are exponentially better constructions, sth like $10^{k/2}$, so I changed this part}
As mentioned in the introduction, an example  for set systems that is expected to be close to extremal  consists
of the $(s-1)^k$ $k$-sets which meet each of $k$ fixed pairwise disjoint $(s-1)$-sets in one element.
A natural vector space analog is the family of all $k$-spaces which meet each of $k$ fixed $(s-1)$-spaces
in general position in a $1$-spaces. This gives an example for an $s$-sunflower-free family $\cF$ of size $[s-1]^k \le  q^{(s-1)k}$ (cf. Lemma~\ref{lem:gauss_bnds}).

We can construct a much larger example via an application of Proposition \ref{prop:lifted_MRD}. Consider the largest family $\cG$ of $k$-spaces in $V(\frac{(s+1)k}{2}-1, q)$
such that no $(\frac{k}{2}+1)$-space lies in more than one element of $\cG$. By Proposition \ref{prop:lifted_MRD}, for $s \geq 4$, it satisfies $|\cG| \ge q^{\frac{(s-1)k^2}4+\frac{(s-2)k}2 -1}$.
The family $\cG$ is $s$-sunflower-free: by construction, any sunflower in $\cG$
can only have a kernel of dimension at most $\frac{k}{2}$.
Thus, if $\cG$ contains an $s$-sunflower $\cS$, then $\dim(\sum_{S \in \cS} S) \geq \frac{k}{2} + \frac{ks}{2}$,
which is impossible in $V(\frac{(s+1)k}{2}-1, q)$. We see that $\cG$ is usually much larger than $\cF$; a situation which
does not occur in the set case. This is much closer to the upper bound, but the exponent is roughly a factor of $2$ off.

In general, we can do better.
Our goal is to describe two general constructions which capture the
idea of the following example.

\begin{Example}\label{ex:small}
Consider $s=3$ and $k=2$.
First we fix a $1$-space $T$ in $V = V(5, q)$.
We find $q^2+1$ $3$-spaces $\Pi_1, \ldots, \Pi_{q^2+1}$
through $T$ such that $\Pi_i \cap \Pi_j = T$.\footnote{%
In the quotient of $T$ we are in $V(4, q)$.
The $q^2+1$ $1$-spaces of $V(2, q^2)$ correspond to a partition of the non-zero vectors of $V(4, q)$
into $2$-spaces, a so-called \emph{spread}.}
In $\Pi_1$ we take all $q^2+q+1$ $2$-spaces.
In all other $\Pi_i$'s we take the $q^2$ $2$-spaces
disjoint from $T$. This yields an example $\cF$ of size
\[
 q^2 \cdot q^2 +q^2+ q+1 = q^4 + q^2+q+1.
\]
The upper bound in this case is
\[
  (q^3+q^2+q+1)(q+1) = q^4 + 2q^3 + 2 q^2 + 2 q + 1.
\]
The family is $3$-sunflower free:
Suppose that $\cS = \{ S_1, S_2, S_3 \}$ is a sunflower
 of the example with kernel $K$.
 If $\dim(K) = 0$, then $\dim(S_1 + S_2 + S_3) = 6$, which is
 impossible.
 Thus, $\dim(K) = 1$. By construction, for $L_1 \in \Pi_i$
 and $L_2 \in \Pi_j$ for $i \neq j$, we have $\dim(L_1 \cap L_2) = 0$.
 Thus, $S_1, S_2, S_3 \in \Pi_i$ for some $i$.
 But $\dim(\Pi_i) = 3$, while $\dim(S_1+S_2+S_3) = 1 + 3 \cdot 1 = 4$, which is, again, impossible.

The family $\cF$ is maximal: If we add a $2$-space disjoint
from $V$, then we find a $3$-sunflower with a kernel of dimension $0$.
If we add a $2$-space meeting $V$ in a $1$-space, then we find a $3$-sunflower with a kernel of dimension $1$.
If we add any of the $2$-spaces within $V$, then it is easy to see that
we find a $3$-sunflower with a kernel of dimension $1$.
\end{Example}

\subsection{A Proposition}

Let $\Sigma$ and $T$ be subspaces of an $n$-dimensional vector space $V$
with $\dim(\Sigma) = \sigma$, $\dim(T) = \tau$, and $T \subseteq \Sigma$.
Fix $d \leq m-\tau$. Let $\cC$ be a family of $m$-dimensional subspaces of $V$ such that
\begin{enumerate}
 \item $\Sigma \cap C = T$ for all $C \in \cC$,
 \item any $d$-space disjoint from $T$ lies in at most one element of $\cC$.
\end{enumerate}
Call such a family $\cC$ an \emph{$(m, d; V, \Sigma, T)$-family}.
The intuition is that, when taking the quotient over $T$, we obtain a subspace code that is disjoint from $\Sigma$.
In the constructions in the subsequent sections
we want to want to nest $(m, d; V, \Sigma, T)$-families within each other.

\begin{Proposition}\label{prop:MRDbnd2}
 Let $n, m, d,\tau, \sigma$ be nonnegative integers
 satisfying $m, n-m \geq d+\tau$ and $\sigma - \tau \leq n-m$.
 Put $V = V(n, q)$.
 Let $\Sigma \in \gauss{V}{\sigma}$ and $T \in \gauss{\Sigma}{\tau}$.
 Then there exists an $(m, d; V, \Sigma, T)$-family $\cC$ such that
 \begin{enumerate}
  \item if $n \geq 2m-\tau$, then $|\cC| \geq q^{d(n-m)}$;
  \item if $n \leq 2m-\tau$, then $|\cC| \geq q^{(m-\tau)(n-2m+\tau+d)}$.
 \end{enumerate}
\end{Proposition}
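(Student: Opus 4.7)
The plan is to reduce the statement directly to Proposition~\ref{prop:lifted_MRD} by passing to the quotient $V/T$. Set $\bar V = V/T$, so $\dim(\bar V) = n-\tau$, and let $\bar\Sigma = \Sigma/T$, a $(\sigma-\tau)$-space of $\bar V$. The hypothesis $\sigma - \tau \leq n-m$ allows me to fix an $(n-m)$-space $\bar W$ of $\bar V$ with $\bar\Sigma \subseteq \bar W$; note that $\dim(\bar V) - (m-\tau) = n-m = \dim(\bar W)$, so $\bar W$ has exactly the codimension required to play the role of the forbidden subspace in Proposition~\ref{prop:lifted_MRD}.

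Next I invoke Proposition~\ref{prop:lifted_MRD} in $\bar V$ with parameters $(n',m',d') = (n-\tau,\,m-\tau,\,d)$, where the forbidden complement is taken to be $\bar W$. The required inequalities $n' \geq m' \geq d' \geq 0$ follow from $n-m \geq d+\tau$ and $m-\tau \geq d$. This produces a family $\bar\cC$ of $(m-\tau)$-spaces of $\bar V$, each disjoint from $\bar W$, such that every $d$-space of $\bar V$ lies in at most one member of $\bar\cC$, and
\[
  |\bar\cC| \;\geq\; \begin{cases} q^{d(n-m)} & \text{if } n'\geq 2m',\\ q^{(m-\tau)(n-2m+\tau+d)} & \text{if } n'\leq 2m'.\end{cases}
\]
The dichotomy $n' \gtreqless 2m'$ is exactly $n \gtreqless 2m-\tau$, matching the two cases in the proposition.

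Finally I lift back. Let $\pi\colon V \to V/T$ be the canonical projection and put $\cC = \{\,\pi^{-1}(\bar C) : \bar C \in \bar\cC\,\}$. Each element of $\cC$ is an $m$-space of $V$ containing $T$, and $|\cC| = |\bar\cC|$. Since $(C \cap \Sigma)/T$ injects into $\bar C \cap \bar\Sigma \subseteq \bar C \cap \bar W = 0$, we obtain $C \cap \Sigma = T$, giving condition~(1). For condition~(2), if $D$ is a $d$-space of $V$ with $D \cap T = 0$, then $\pi(D)$ is a $d$-space of $\bar V$, and $D \subseteq C$ is equivalent to $\pi(D) \subseteq \bar C$; the uniqueness property of $\bar\cC$ then forces $D$ to lie in at most one element of $\cC$.

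The only substantive thing to check beyond Proposition~\ref{prop:lifted_MRD} is that the correspondence ``$d$-spaces of $V$ disjoint from $T$'' versus ``$d$-spaces of $\bar V$'' is faithful in the direction used, which is straightforward. I do not expect any real obstacle: the whole proof is a quotient-and-lift argument, and the case split in the size bound matches the case split in the MRD bound after substituting $n'=n-\tau$ and $m'=m-\tau$.
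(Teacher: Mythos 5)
Your proof is correct and takes the same quotient-and-lift approach as the paper, which simply applies Proposition~\ref{prop:lifted_MRD} in $V/T$; you spell out the details (extending $\Sigma/T$ to an $(n-m)$-space, verifying the dimension inequalities, and checking that the two defining conditions of an $(m,d;V,\Sigma,T)$-family survive the lift) that the paper leaves to the reader.
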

\begin{proof}
  We have $\dim(V/T) = n-\tau$,
  $\dim(S/T) = m-\tau$ for an $m$-space $S$ of $V$ with $T \subseteq S$, and
$\dim(\Sigma/T) = \sigma - \tau \leq n-m = (n-\tau) - (m-\tau)$.
  Note that $n-\tau\ge 2(m-\tau)$ corresponds to the first case,
  and $n - \tau \le 2(m-\tau)$ corresponds to the second case.
  We can apply Proposition \ref{prop:lifted_MRD}
  to $V/T$ and obtain the assertion.
\end{proof}

Proposition \ref{prop:MRDbnd2} is the basis for our two constructions.

\subsection{A Construction for \texorpdfstring{$s \geq k+1$}{s >= k+1}}

Let $s \geq k+1 \geq 3$. (For $k=1$, clearly $[s-1]$ is the extremal value.)
Then we can construct a family $\cA = \cA(s, k)$ in $V(ks-1, q)$ iteratively as follows:
For $i \in \{ 1, \ldots, k-1 \}$, put
\[
 (m_i, d_i) = (k-1+i(s-1), k-i).
\]
Fix subspaces $T_1 \subseteq \ldots \subseteq T_{k-1} \subseteq V = V(sk-1, q)$ with $\dim(T_i) = 2m_i-m_{i+1} = m_{i-1}$
(where we put $m_{k} = sk-1$).
For the following, we will apply Proposition \ref{prop:MRDbnd2}.
Let $\cC_{k-1}$ be a maximum size $(m_{k-1}, d_{k-1}; V, T_{k-1}, T_{k-1})$-family.

For each $C_i \in \cC_i$, $i \in \{ 2, \ldots, k-1\}$ replace $C_i$
with a maximum size $(m_{i-1}, d_{i-1}; C_i, T_{i}, T_{i-1})$-family $\cC_{i-1}$.
(Note that the family $\cC_{i-1}$ depends on $C_i$, and, as such, we generate a whole tree of such families, starting from the root at level $k$. But we suppress all indices other than the level in that tree for brevity.)
At the end, replace $C_1 \in \cC_1$ with a maximum size $(k, k; C_1, T_1, \{ 0 \})$-family $\cC_0$,
i.e., the family of all $k$-subspaces in $C_1$ (of dimension $k+s-2 \geq 2k-1$) avoiding $T_1$ (of dimension $k-1$).

This results in a family $\cA$ of $k$-spaces of size
\[
 |\cC_0| \cdot |\cC_1| \cdots |\cC_{k-1}|.
\]

Let us show that the family $\cA$ has the desired properties.
\begin{Lemma}\label{lem:wearedisjoint}
  Let $S, S' \in \cA$ with $S \subseteq C \in \cC_{i-1}$ and $S' \subseteq C' \in \cC_{i-1}$
  with $C \neq C'$ for some $(m_{i-1}, d_{i-1}; C_i, T_i, T_{i-1})$-family $\cC_{i-1}$.
  Then $\dim(S \cap S') \leq k-i$.
\end{Lemma}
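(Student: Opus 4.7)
The plan is to reduce everything to property~(2) in the definition of an $(m_{i-1}, d_{i-1}; C_i, T_i, T_{i-1})$-family: no $d_{i-1}=(k-i+1)$-dimensional subspace disjoint from $T_{i-1}$ can lie in two distinct members of $\cC_{i-1}$. So if $\dim(S\cap S')\geq k-i+1$, I would pick any $(k-i+1)$-dimensional subspace $U\subseteq S\cap S'$; since $U\subseteq C\cap C'$ with $C\neq C'$, property~(2) forces $U\cap T_{i-1}\neq\{0\}$. The whole lemma therefore reduces to the single claim
\[
 S\cap T_{i-1}=\{0\}.
\]

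To establish this, I trace $S$ through its ancestry in the recursive tree defining $\cA$: there is a chain $S\subseteq A_1\subseteq A_2\subseteq\cdots\subseteq A_{k-1}\subseteq V$ with $A_j\in\cC_j$ at each level (with $A_{i-1}=C$ and $A_i=C_i$). Property~(1) of each $\cC_j$ for $j\leq k-2$ gives $A_j\cap T_{j+1}=T_j$, while for $\cC_{k-1}$ it gives $T_{k-1}\subseteq A_{k-1}$. Starting from $A_1\cap T_2=T_1$ and using $A_1\subseteq A_{j-1}$, one inducts on $j$ to obtain $A_1\cap T_j=T_1$ for every $1\leq j\leq k-1$: the inclusion $A_1\cap T_j\subseteq A_{j-1}\cap T_j=T_{j-1}$ combined with the inductive hypothesis $A_1\cap T_{j-1}=T_1$ pins down $A_1\cap T_j\subseteq T_1$, and the reverse inclusion is automatic. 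The bottom family $\cC_0$ satisfies $S\cap T_1=\{0\}$ by construction, hence $S\cap T_{i-1}\subseteq A_1\cap T_{i-1}\cap S=T_1\cap S=\{0\}$.

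The boundary case $i=1$ is immediate: $S=C$ and $S'=C'$ are then distinct $k$-spaces in $\cC_0$, and $\dim(S\cap S')\leq k-1$ follows directly from property~(2) of the $(k,k;C_1,T_1,\{0\})$-family. The only nontrivial content is the propagation claim $A_1\cap T_j=T_1$, which in turn uses nothing beyond the nested defining identities $A_j\cap T_{j+1}=T_j$ and the containment $T_{j-1}\subseteq T_j$. I do not foresee a real obstacle beyond carefully setting up this chain of ancestors and keeping track of the indices.
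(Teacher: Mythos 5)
Your proof is correct and follows the same strategy as the paper's: both reduce the bound on $\dim(S\cap S')$ to property~(2) of the $(m_{i-1},d_{i-1};C_i,T_i,T_{i-1})$-family together with the key fact that $S\cap T_{i-1}=\{0\}$. The difference is presentational: the paper simply asserts $\dim(S\cap T_{i-1})=0$ ``by construction'' and then passes to the quotient, bounding $\dim((C\cap C')/T_{i-1})\le d_{i-1}-1$; you instead extract an explicit $d_{i-1}$-space $U\subseteq S\cap S'$ and apply property~(2) to $U$ directly. More substantively, you supply the missing detail the paper leaves implicit, namely a careful proof that $S\cap T_{i-1}=\{0\}$ by tracing the ancestry chain $S\subseteq A_1\subseteq\cdots\subseteq A_{k-1}$ and inducting on $j$ to show $A_1\cap T_j=T_1$ using the identities $A_{j-1}\cap T_j=T_{j-1}$, then invoking $S\cap T_1=\{0\}$ from $\cC_0$. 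That induction is exactly what justifies the paper's ``by construction,'' so your write-up is a rigorously spelled-out version of the same argument rather than a genuinely different route.
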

\begin{proof}
 By construction, $\dim(S \cap T_{i-1}) = \dim(S' \cap T_{i-1}) = 0$
 and $(S \cap S') + T_{i-1} \subseteq C \cap C'$.
 As $C, C' \in \cC_{i-1}$, $\dim((C\cap C')/T_{i-1}) \leq d_{i-1}-1 = k-i$.
 Hence, $\dim(S \cap S') \leq k-i$.
\end{proof}

\begin{Lemma}\label{lem:nosunflower}
 The family $\cA$ does not contain an $s$-sunflower.
\end{Lemma}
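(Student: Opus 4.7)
The plan is to exploit the tree structure of the construction $\cA$. Each $S\in\cA$ sits in a unique chain of ancestors $S=C_0(S)\subseteq C_1(S)\subseteq\cdots\subseteq C_{k-1}(S)\subseteq V$ with $C_i(S)\in\cC_i$ for $i\geq 1$; for uniformity I will also set $C_k(S)=V$. Given a hypothetical $s$-sunflower $\cS=\{S_1,\ldots,S_s\}\subseteq\cA$ with kernel $K$ and $d:=\dim K$, I would define $i^*$ to be the smallest index in $\{0,1,\ldots,k\}$ at which all the $S_j$'s share a common level-$i^*$ ancestor. Such $i^*$ exists (trivially at level $k$) and must satisfy $i^*\geq 1$, since the petals of a sunflower are distinct.

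For the upper bound on $d$, minimality of $i^*$ supplies two indices $j\neq j'$ whose level-$(i^*-1)$ ancestors differ while their level-$i^*$ ancestors coincide. Applying Lemma~\ref{lem:wearedisjoint} at level $i=i^*$ yields
\[
  d \;=\; \dim(S_j\cap S_{j'}) \;\leq\; k-i^*.
\]
For the complementary lower bound, the sum $S_1+\cdots+S_s$ sits inside the common level-$i^*$ ancestor, whose dimension equals $m_{i^*}=k-1+i^*(s-1)$. (It is convenient that the formula $m_i=k-1+i(s-1)$ extends consistently to $i=k$, since $m_k=sk-1=\dim V$, so one expression covers every case.) Combining this with the sunflower identity $\dim(S_1+\cdots+S_s)=d+s(k-d)=sk-d(s-1)$, the bound $sk-d(s-1)\leq k-1+i^*(s-1)$ rearranges to $d(s-1)\geq (k-i^*)(s-1)+1$, and integrality of $d$ together with $s-1\geq 1$ forces $d\geq k-i^*+1$, contradicting the upper bound above.

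The hard part is purely bookkeeping: I need to verify that the level parameter $i^*$ is well-defined and that a single inequality handles every case $1\leq i^*\leq k$, in particular the edge $i^*=k$ (where the kernel is forced to be trivial and $\dim(S_1+\cdots+S_s)=sk$ threatens to overflow $V$). All of the substance is packaged in Lemma~\ref{lem:wearedisjoint}: the MRD-based construction ensures that branching at tree level $i$ costs pairwise intersections at least $i$ dimensions below $k$, which is precisely the slack that the sunflower dimension identity cannot absorb.
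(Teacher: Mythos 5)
Your proposal is correct and follows essentially the same route as the paper's own argument: pick the minimal level $i^*$ at which all petals share a common ancestor, use Lemma~\ref{lem:wearedisjoint} to bound $\dim K\leq k-i^*$, and then contradict this with the dimension count $\dim(S_1+\cdots+S_s)=sk-(s-1)\dim K$ inside the level-$i^*$ ancestor of dimension $m_{i^*}=k-1+i^*(s-1)$. The only cosmetic difference is that you phrase the final step as forcing $d\geq k-i^*+1$ by integrality, while the paper directly observes that $\dim(S_1+\cdots+S_s)$ would overflow $\dim(C_{i^*})$.
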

\begin{proof}
 Suppose that $\cS$ is an $s$-sunflower with kernel $K$ in $\cA$.
Let $i$ be the smallest $i$ such that some $\cC_{i-1}$ contains $\cS$, i.e., that each petal belongs to the subfamily of subspaces generated from $\cC_{i-1}$. Then, for some $S,S'\in \cS$ we must have $S\subset C, S'\subset C'$ for distinct $C,C'\in \cC_{i-1}$. Therefore, we may apply  Lemma \ref{lem:wearedisjoint} and get that the kernel $K$ of $\cS$ satisfies $\dim(K) \le k-i$.
 But $\dim(S_1 + \cdots + S_s) \geq (k-i) + si = k+(s-1)i > k-1+i(s-1) = \dim(C_{i})$, a contradiction.
\end{proof}

\begin{Lemma}
 The family $\cA$ has size at least $q^{(s-1) \binom{k+1}{2} - k}$.
\end{Lemma}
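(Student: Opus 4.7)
The plan is to compute a lower bound on $|\cC_j|$ at each level $j$ of the construction, multiply the bounds, and verify that the exponent simplifies to $(s-1)\binom{k+1}{2}-k$. Everything reduces to careful bookkeeping with Proposition~\ref{prop:MRDbnd2}; there is no new construction needed.

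First I would handle the generic intermediate layer. For $i\in\{2,\dots,k-1\}$ and any $C_i\in\cC_i$, the subfamily $\cC_{i-1}$ is an $(m_{i-1},d_{i-1};C_i,T_i,T_{i-1})$-family, so in the notation of Proposition~\ref{prop:MRDbnd2} we have $n=m_i$, $m=m_{i-1}$, $\tau=\dim(T_{i-1})=m_{i-2}$, and $d=d_{i-1}=k-i+1$. The definition $m_j=k-1+j(s-1)$ yields the telescoping identity $m_i=2m_{i-1}-m_{i-2}$, so $n=2m-\tau$ and the two regimes of Proposition~\ref{prop:MRDbnd2} collapse to the same value
\[ |\cC_{i-1}|\ \ge\ q^{d_{i-1}(n-m)}\ =\ q^{(k-i+1)(s-1)}. \]
The analogous computation for $\cC_{k-1}$ (with $\Sigma=T=T_{k-1}$, $d=1$, $\tau=\sigma=m_{k-2}$) lands again on the boundary $n=2m-\tau$ and gives $|\cC_{k-1}|\ge q^{s-1}$. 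The base layer $\cC_0$ is the family of all $k$-subspaces of $C_1$ (of dimension $m_1=k+s-2$) disjoint from the $(k-1)$-space $T_1$; Proposition~\ref{prop:MRDbnd2} (or the direct count $\gauss{s-1}{k}\cdot q^{k(k-1)}$ of complements of $T_1$ in its preimages) gives $|\cC_0|\ge q^{k(s-2)}$.

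Multiplying across all layers then yields
\[ |\cA|\ \ge\ q^{k(s-2)}\cdot\prod_{j=1}^{k-1}q^{(k-j)(s-1)}\ =\ q^{\,k(s-2)+(s-1)\binom{k}{2}}, \]
and the identity $\binom{k+1}{2}-\binom{k}{2}=k$ lets me rewrite the exponent as $(s-1)\binom{k+1}{2}-k$, which is the claimed bound.

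There is no real conceptual obstacle. The only thing to notice is the telescoping identity $m_i=2m_{i-1}-m_{i-2}$ built into the definition of the $m_j$; this is what places every invocation of Proposition~\ref{prop:MRDbnd2} on the boundary between its two cases and produces a uniform factor of $q^{d_{i-1}(s-1)}$ at each intermediate level. The rest is arithmetic.
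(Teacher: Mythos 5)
Your proposal is correct and follows essentially the same approach as the paper: invoke Proposition~\ref{prop:MRDbnd2} at each level with the same parameter choices, obtain $q^{k(s-2)}$ for $\cC_0$ and $q^{(k-i)(s-1)}$ for each $\cC_i$, $i\in\{1,\dots,k-1\}$, and sum the exponents. The re-indexing (treating $\cC_{k-1}$ and the generic intermediate level separately) and the remark that the arithmetic progression $m_j=k-1+j(s-1)$ places each invocation on the boundary $n=2m-\tau$ between the two cases of Proposition~\ref{prop:MRDbnd2} are nice presentational touches, but the underlying computation is identical.
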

\begin{proof}
 For $\cC_0$ we find $|\cC_0| \geq q^{k(s-2)}$ by Proposition \ref{prop:MRDbnd2}
 as $n=k+s-2$, $m=k$, $d=k=m$, $\sigma = k-1$ and $\tau = 0$.
 %\textcolor{orange}{could you give the details?}.
 We also apply Proposition \ref{prop:MRDbnd2} for the remaining $\cC_i$, $i \in \{ 1, \ldots, k-1 \}$,
 with $n = m_{i+1}$, $m = m_i$, $d=k-i$, $\sigma = m_i$, and $\tau = m_{i-1}$.
 We find $|\cC_i| \geq q^{(k-i)(s-1)}$.
 Then $k(s-2) + \sum_{i=1}^{k-1} (k-i)(s-1) = (s-1) \binom{k+1}{2} - k$ shows the claim.
\end{proof}

This shows the first lower bound of Theorem \ref{thm:main}. We note that the family we constructed is not maximal.
It might still be true that all largest $s$-sunflower-free families of $k$-spaces
essentially contain a nesting of lifted MRD codes with the parameters
chosen as in our construction of $\cA$.

\subsection{A Construction for \texorpdfstring{$s \leq k$}{s <= k}}

Let $3 \leq s \leq k$.
In this case our construction is very similar to the construction of $\cA$,
but we are constrained by the conditions of Proposition \ref{prop:MRDbnd2}
which prevents us from nesting $k$ lifted MRD codes.
Here, we will only nest $s-1$ lifted MRD codes.

We construct a family $\cB = \cB(s, k)$ in $V(ks-1, q)$ iteratively as follows:
For $i \in \{ 1, \ldots, s-1 \}$, put
\[
 (m_i, d_i) = \left( ik-1, \left\lfloor \frac{s-1-i}{s-1} k \right\rfloor +1\right).
\]
Let $T_2 \subseteq \ldots \subseteq T_{s-1} \subseteq V = V(sk-1, q)$ with $\dim(T_i) = 2m_i-m_{i+1}=m_{i-1}$
(where we put $m_{s} = sk-1$).
Let $\cC_{s-1}$ be a maximal $(m_{s-1}, d_{s-1}; V, T_{s-1}, T_{s-1})$-family.
As when constructing $\cA$, we iteratively replace subspaces $C\in \cC_i$ by $\cC_{i-1}$'s.
At the end, replace $C_2 \in \cC_2$ with a maximal $(k, d_1; C_2, T_2, \{ 0 \})$-family $\cC_1$.
We end up with  a family $\cB$ of $k$-spaces of size
\[
 |\cC_1| \cdot |\cC_2| \cdots |\cC_{s-1}|.
\]
Note that
\[
 \left\lfloor \frac{s-1-i}{s-1} k \right\rfloor  + s\left(k - \left\lfloor \frac{s-1-i}{s-1} k \right\rfloor\right) > (i+1)k-1,
\]
thus, $\cC_i$ cannot contain an $s$-sunflower with kernel $\lfloor \frac{s-1-i}{s-1} k - 1 \rfloor$ or less.
Then, arguing as for $\cA$ in Lemma \ref{lem:wearedisjoint} and Lemma \ref{lem:nosunflower},
we conclude that $\cB$ does not contain an $s$-sunflower.

\begin{Lemma}
 The family $\cB$ has size at least
 \[q^{(s-2)\binom{k+1}{2} + k   \frac{\mathrm{gcd}(k, s-1) - 3}{2}}.\]
\end{Lemma}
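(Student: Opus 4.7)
The plan is to compute $|\cC_i|$ at each level via Proposition~\ref{prop:MRDbnd2}, multiply the bounds, and then collapse the resulting sum of floor functions using a classical Hermite-style identity. At each level I would read off the five parameters $(n, m, d, \sigma, \tau)$ of Proposition~\ref{prop:MRDbnd2}: for $\cC_{s-1}$, $n = sk - 1$, $m = (s-1)k - 1$, $d = d_{s-1} = 1$, $\sigma = \tau = m_{s-2}$; for each middle level $\cC_i$ with $2 \le i \le s-2$, $n = (i+1)k - 1$, $m = ik - 1$, $d = d_i$, $\sigma = ik - 1$, $\tau = (i-1)k - 1$; and for $\cC_1$, $n = 2k - 1$, $m = k$, $d = d_1$, $\sigma = k-1$, $\tau = 0$.

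A quick check shows that for every $i \ge 2$ one has $2m - \tau = n$, so the first case of Proposition~\ref{prop:MRDbnd2} applies and gives $|\cC_i| \ge q^{d_i(n - m)} = q^{d_i k}$. For $\cC_1$, however, $n = 2k - 1 < 2k = 2m - \tau$, so the second case applies and yields $|\cC_1| \ge q^{(m - \tau)(n - 2m + \tau + d_1)} = q^{k(d_1 - 1)}$. Taking the product and re-indexing by $j = s - 1 - i$, the total exponent becomes
\[
 k \sum_{i=1}^{s-1} d_i - k \;=\; k(s-2) + k \sum_{j=0}^{s-2} \left\lfloor \frac{jk}{s-1} \right\rfloor.
\]

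The final ingredient is the classical identity
\[
 \sum_{j=0}^{n-1} \left\lfloor \frac{jk}{n} \right\rfloor \;=\; \frac{(k-1)(n-1) + \mathrm{gcd}(k, n) - 1}{2},
\]
which I would derive by writing $\lfloor jk/n \rfloor = jk/n - \{jk/n\}$ and observing that, with $d = \mathrm{gcd}(k, n)$ and $n' = n/d$, the residues $jk \bmod n$ realize each of $0, d, 2d, \ldots, (n'-1)d$ exactly $d$ times. Substituting $n = s - 1$ and simplifying gives
\[
 k(s-2) + \frac{k(k-1)(s-2)}{2} + \frac{k(\mathrm{gcd}(k, s-1) - 1)}{2} \;=\; (s-2)\binom{k+1}{2} + k \cdot \frac{\mathrm{gcd}(k, s-1) - 1}{2},
\]
which is the claimed lower bound.

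The bulk of the work, and the only real obstacle, is the boundary-level bookkeeping: verifying under $3 \le s \le k$ that the hypotheses of Proposition~\ref{prop:MRDbnd2} (especially $\sigma - \tau \le n - m$, which sits at equality at every level, and $m \ge d + \tau$ for $\cC_1$, which uses $k \ge s - 1$) hold, and determining which case of the Proposition applies where. Once the exponent is rewritten as $k(s-2) + k \sum_j \lfloor jk/(s-1) \rfloor$, the Hermite identity finishes the argument.
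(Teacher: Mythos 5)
Your proof is correct and follows exactly the same route as the paper's: read off the parameters of Proposition~\ref{prop:MRDbnd2} at each level, observe that levels $i \geq 2$ sit at the boundary $n = 2m - \tau$ while level $1$ falls into the second case, multiply the resulting $q$-powers, and collapse the floor-sum via the Hermite identity cited from \cite{ConcreteMath}. Your bookkeeping is in fact slightly cleaner than the paper's displayed calculation, which contains a small index typo (the intermediate sum is written over $i \in \{2, \ldots, s-2\}$ even though the next line tacitly absorbs the extra $q^{k}$ contribution from $\cC_{s-1}$), and you correctly note that the hypothesis $n - m \geq d + \tau$ at level $1$ is where the assumption $s \leq k$ is actually used.
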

% \textcolor{orange}{I think that we should eventually give a calculation without any divisibility assumption.}
\begin{proof}
 Apply Proposition \ref{prop:MRDbnd2}
 with $n=2k-1$, $m=k$, $d=\lfloor \frac{s-2}{s-1} k \rfloor + 1$,
 $\sigma = k-1$, $\tau = 0$. We find $|\cC_1| \geq q^{k \lfloor \frac{s-2}{s-1} k \rfloor }$.
 For $\cC_i$, $i \in \{ 2, \ldots, s-2 \}$, apply Proposition \ref{prop:MRDbnd2}
 with $n=(i+1)k-1$, $m=ik-1$, $d=\lfloor \frac{s-1-i}{s-1} k \rfloor + 1$,
 $\sigma = ik-1$, and $\tau = (i-1)k-1$.
 We find
 $|\cC_i| \geq q^{k(\lfloor \frac{s-1-i}{s-1} k \rfloor + 1)}$.
%  It remains to sum up the exponents.

 For positive integers $\mu, \nu$ we have, see \cite[p. 90]{ConcreteMath},
 \[
  \sum_{\kappa=1}^{\nu-1} \left\lfloor \frac{\kappa \mu}{\nu} \right\rfloor
 = \frac{(\mu-1)(\nu-1) + \mathrm{gcd}(\mu, \nu) - 1}{2}.
 \]
 Hence, the sum of exponents equals
 \begin{align*}
   & k \left\lfloor \frac{s-2}{s-1} k \right\rfloor + \sum_{i=2}^{s-2} k\left(\left\lfloor \frac{s-1-i}{s-1} k \right\rfloor + 1\right)\\
   &= (s-3)k + k \sum_{i=1}^{s-2} \left\lfloor \frac{ik}{s-1} \right\rfloor\\
   &= (s-3)k + k \frac{(s-2)(k-1)+\mathrm{gcd}(k, s-1) - 1}{2}\\
   &= (s-2)\binom{k+1}{2} + k \frac{\mathrm{gcd}(k, s-1) - 3}{2}. \qedhere
 \end{align*}
\end{proof}

This shows the second lower bound of Theorem \ref{thm:main}.

We are not aware of any reasonable set of parameters for which a construction containing $\cB$ does
not seem to be the largest. In contrast to $\cA$, there are constructions which nest lifted MRD codes of different parameters
and whose size differs from $|\cB|$ only in a minor power of $q$.
It might be true that a largest sunflower-free family of $k$-spaces contains
a large nesting of lifted MRD codes as $\cB$ does, but, unlike for $\cA$,
it is not clear that our choice of parameters is always optimal.
The following conjecture is not unlikely.

\begin{Conjecture}
 Let $s \leq k$ and $q$ be a prime power. Then there exists a constant $C = C(q, s)$
 such that any $s$-sunflower-free family $\cF$ of $k$-spaces
 over the field with $q$ elements satisfies $|\cF| \leq |\cB(s, k)| \cdot C^k$.
\end{Conjecture}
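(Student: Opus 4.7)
My plan would be to prove the conjecture by induction on $k$, adapting the spread / random restriction framework used by Alweiss, Lovett, Wu, and Zhang \cite{ALWZ2021} and Bell, Chueluecha, and Warnke \cite{BCW} for the set analogue. The gap between Lemma~\ref{lem:upper_bnd} and $|\cB(s, k)|$ is roughly $q^{\binom{k+1}{2}}$, so one really does need a substantially new mechanism beyond the greedy covering argument.

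The central lemma I would aim to establish is a \emph{vector-space spread lemma}: there exists $R = R(q, s)$ such that if $\cF$ is a family of $k$-spaces with the property that no $j$-space lies in more than $R^{-j} \cdot |\cF|$ members of $\cF$ for any $1 \leq j \leq k$, then $\cF$ contains an $s$-sunflower. The proof would mimic the set-case argument: one selects $s$ petals sequentially, and at each step bounds the probability of failure using the spread hypothesis. Once such a lemma is in hand, the conjecture follows by an iterated dichotomy: either $\cF$ is $R$-spread and therefore contains a sunflower, or some $j$-space $K$ captures a sufficiently dense subfamily, in which case one passes to the quotient $V/K$ and inducts on $k-j$, pairing the inductive bound $|\cB(s, k-j)|$ for the quotient against the number of extensions from $K$.

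The main obstacle is the difference between the set and vector-space definitions of sunflower. In the set case one only needs the petals to be pairwise disjoint outside the kernel, a union of pairwise conditions. In the vector-space case, $S_1/K, \ldots, S_s/K$ must be in \emph{general position}, so after selecting $i$ petals whose quotients span an $i(k - \dim K)$-dimensional subspace of $V/K$, the next petal must be transversal to this entire span, a genuinely global linear-independence condition rather than a union of pairwise conditions. The spread hypothesis directly controls multiplicities through $j$-spaces, but bounding the number of elements whose quotient fails to be transversal to an arbitrary intermediate subspace seems to require a new averaging argument, presumably exploiting that the fraction of $k$-spaces transversal to a fixed subspace in $V(n, q)$ tends to $1$ as $n$ grows. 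Matching the target exponent up to an additive $O(k)$ term, including the $\gcd$-correction in the construction of $\cB$, would then be a bookkeeping exercise verifying that the recursion on $k$ telescopes correctly.
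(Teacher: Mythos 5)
The statement you are attempting to prove is explicitly labelled a \emph{Conjecture} in the paper, and the authors give no proof of it --- indeed, the closing section lists it among the open problems. So there is no argument in the paper to compare against; what you have written is a research programme, not a verification, and you yourself flag the central step (a vector-space spread lemma handling the general-position condition) as unresolved. That self-awareness is to your credit, but it also means the proposal does not establish the conjecture.

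Beyond the gap you acknowledge, there are two structural difficulties your plan does not address. First, the target bound $|\cB(s,k)|\cdot C^k \approx q^{(s-2)\binom{k+1}{2}+O(k)}$ is of order $q^{\Theta(k^2)}$, not $R^k$ for a fixed spread parameter $R$. In the set setting the dichotomy ``spread or densify through an element'' telescopes with a constant $R$, and this is what produces the clean $R^k$ bound. Here, passing from $k$ to $k-1$ through a heavy $1$-space must be permitted to lose a factor that grows like $q^{(s-2)k}$ (so that $q^{(s-2)\binom{k+1}{2}}$ telescopes correctly), which means the spread threshold in your putative lemma must itself depend on $k$. Whether a threshold of that magnitude still guarantees a sunflower --- especially under the stronger general-position requirement --- is precisely what you would need to prove, and the uniform $R^{-j}$ scaling you borrowed from the set case is not the natural normalisation once a $k$-space contains $\gauss{k}{j}_q \approx q^{j(k-j)}$ many $j$-subspaces rather than $\binom{k}{j}$ many $j$-subsets. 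Second, even in the set case the Alweiss--Lovett--Wu--Zhang method leaves a $(\log k)^k$ loss that has not been removed; the conjecture here asks for $C = C(q,s)$ with no $k$-dependence at all, so a verbatim adaptation would at best prove a weaker statement with an extra $(\log k)^k$ or $(\log_q k)^k$ factor. Closing either of these gaps would be a genuine advance; as written, the proposal is a plausible direction but falls well short of a proof.
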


\section{Open Problems}

We have already asked if it is true that any large sunflower-free family
of $k$-spaces necessarily contains a large nesting of lifted MRD codes.
As complementary questions, we have to ask if there exists some fundamentally
different construction for large sunflower-free families.

The behavior for $s \leq k$ of the Erd\H{o}s-Rado sunflower problem
for vector spaces is unclear. This is a dimension-less problem.
For answering this question, it might be worthwhile to
work out the largest $s$-sunflower-free families in $V(n, q)$ for fixed $n$.

For an $s$-set-sunflower-free family $\cF$ of $k$-spaces, we can treat $\cF$ as a family
of $[k]$-sets for which only $k$ intersection sizes $[i]$ for $i \in \{ 0, \ldots, k-1 \}$ are allowed.
For instance, see \cite{JJSW2025} for recent work on the
Erd\H{o}s-Rado sunflower problem with restricted intersection sizes.
Furthermore, this will connect to the work on subspace codes \cite{ER2015}.

\paragraph*{Acknowledgements}
The research of Andrey Kupavskii was supported by the Ministry of Economic Development of the Russian Federation (agreement with MIPT No. 139-15-2025-013, dated June 20, 2025, IGK 000000C313925P4B0002).


\begin{thebibliography}{99}

\bibitem{ALWZ2021}
R.~Alweiss, S.~Lovett, K.~Wu \& J.~Zhang,
{\it Improved bounds for the sunflower lemma},
Ann.~of Math. (2) {\bf 194} (2021) 795--815.

\bibitem{BCW} T. Bell, S. Chueluecha \& L. Warnke,
{\it Note on sunflowers},
Discrete Math. {\bf 344} (2021) N7.

\bibitem{BDBD2022}
A.~Blokhuis, M.~De Boeck \& J.~D'haeseleer,
{\it On the sunflower bound for k-spaces, pairwise intersecting in a point},
Des.~Codes Cryptogr. {\bf 90} (2022) 2101--2111.

\bibitem{Deza1973}
M.~Deza,
{\it Une propri\'et\'e extr\'emale des plans projectifs finis dans une classe de codes equidistants},
Discrete Math. {\bf 6} (1973) 343--352.

\bibitem{DF1981}
M.~Deza \& P.~Frankl,
{\it Every large set of equidistant $(0, +1, -1)$-vectors
forms a sunflower},
Combinatorica {\bf 1}(3) (1981) 225--231.

\bibitem{ER1960}
P.~Erd\H{o}s \& R.~Rado,
{\it Intersection theorems for systems of sets},
J.~Lond.~Math.~Soc. (2) {\bf 35} (1960) 85--90.

\bibitem{ER2015}
T.~Etzion \& N.~Raviv,
{\it Equidistant codes in the Grassmannian},
Discrete Appl. Math. {\bf 186} (2015) 87--97.

\bibitem{Gabidulin1985}
\`{E}.~M.~Gabidulin,
{\it ТЕОРИЯ КОДОВ С МАКСИМАЛЬНЫМ РАНГОВЫМ РАССТОЯНИЕМ (Engl.: Theory of Codes with Maximum Rank Distance)},
% {\it Theory of Codes with Maximum Rank Distance},
Problemy Peredachi Informatsii {\bf 21}(1) (1985) 3--16.

\bibitem{ConcreteMath}
R.~L.~Graham, D.~E.~Knuth \& O.~Patashnik,
{\it Concrete Mathematics}, Addison-Wesley, 1994.

\bibitem{HK2017}
D.~Heinlein \& S.~Kurz,
{\it Asymptotic Bounds for the Sizes of Constant Dimension Codes and an Improved Lower Bound}, 2017,
In: Á.~Barbero, V.~Skachek, \O{}. Ytrehus, (eds) Coding Theory and Applications. ICMCTA 2017. Lecture Notes in Computer Science, vol 10495.

\bibitem{JJSW2025}
B.~Janzer, Z.~Jin, B.~Sudakov \& K.~Wu,
{\it Sunflowers and Ramsey problems for restricted intersections},
arXiv:2504.15264 [math.CO] (2025).

\bibitem{SKK2007}
D.~Silva, F.~Kschischang \& R.~K\"otter.,
{\it A rank-metric approach to error control in random network coding},
IEEE Trans. Inform. Theory {\bf 54}(9) (2008) 3951--3967.


\end{thebibliography}
\end{document}